\numberwithin{equation}{section}
\newtheorem{theorem}{Theorem}[section]
\newtheorem{lemma}[theorem]{Lemma}
\newtheorem{proposition}[theorem]{Proposition}
\theoremstyle{remark}
\newtheorem{remark}[theorem]{Remark}
\newtheorem{example}[theorem]{Example}
\theoremstyle{definition}
\newcommand{\bp}{\begin{proof}}
\newcommand{\ep}{\end{proof}}
\mathchardef\mhyph="2D
\newcommand{\C}{\mathbb{C}}
\newcommand{\Z}{\mathbb{Z}}
\newcommand{\eps}{\varepsilon}
\newcommand{\CC}{\mathcal{C}}
\newcommand{\un}{\mathds{1}}
\newcommand\Ad{\operatorname{Ad}}
\newcommand\Irr{\operatorname{Irr}}
\newcommand\Nat{\operatorname{Nat}}
\newcommand\Rep{\operatorname{Rep}}
\newcommand\Tr{\operatorname{Tr}}
\newcommand\kker{\mathfrak{Ker}}
\begin{document}

\title{Probabilistic boundaries of finite extensions of quantum groups}

\author[S. Malacarne]{Sara Malacarne}

\email{saramal@math.uio.no}

\author[S. Neshveyev]{Sergey Neshveyev}

\email{sergeyn@math.uio.no}

\address{Department of Mathematics, University of Oslo, P.O. Box 1053
Blindern, NO-0316 Oslo, Norway}

\date{April 15, 2017; minor changes July 13, 2017}

\maketitle

\begin{abstract}
Given a discrete quantum group $H$ with a finite normal quantum
subgroup $G$, we show that any positive, possibly unbounded,
harmonic function on $H$ with respect to an irreducible invariant
random walk is $G$-invariant. This implies that, under suitable
assumptions, the Poisson and Martin boundaries of~$H$ coincide with
those of $H/G$. A similar result is also proved in the setting of
exact sequences of C$^*$-tensor categories. As an immediate
application, we conclude that the boundaries of the duals of the
group-theoretical easy quantum groups are classical.
\end{abstract}

\bigskip

\section*{Introduction}

The study of probabilistic boundaries of quantum random walks was
initiated in the 90s by Biane~\cite{B1}, who considered random walks
on the duals of compact Lie groups, and by Izumi~\cite{I1}, who
developed the Poisson boundary theory for discrete quantum groups.
The Martin boundary theory for discrete quantum groups was later
developed by Tuset and the second author~\cite{NT1}. Since then the
boundaries have been computed in a number of cases, see
e.g.~\cite{T,VVV,DRVV}. The situation is particularly satisfactory
for amenable quantum groups, where the Poisson boundaries have been
identified for a large and important class of random
walks~\cite{T,NY2}. On the other hand, in the nonamenable case the
duals of free unitary quantum groups remain the main example of a
truly noncommutative computation~\cite{VVV}.

In this note we consider probably the simplest example of discrete
quantum groups that are neither commutative nor cocommutative,
namely, the crossed products
$\ell^\infty(H)=\ell^\infty(\Gamma)\rtimes S$, where $\Gamma$ is a
discrete group and $S$ is a finite group acting on $\Gamma$ by group
automorphisms. They include the duals of the group-theoretical easy
quantum groups recently studied in~\cite{RW}. We show that under
natural assumptions both boundaries of $H$ coincide with the
corresponding classical boundaries of $\Gamma$.

It should be noted that the Poisson boundaries of certain random
walks on crossed products have been already studied in~\cite{I2},
see also~\cite{KNR}, and shown to be isomorphic to crossed products
of Poisson boundaries. There is no contradiction here, we could have
obtained a similar result if we considered degenerate random walks
on $\ell^\infty(\Gamma)\rtimes S$ that are trivial on the $C^*(S)$
part.

In fact, we formulate and prove our results in a more general
setting than that of crossed products. We consider a discrete
quantum group $H$ with a finite normal quantum subgroup $G$, and
show that under suitable assumptions the Poisson and Martin
boundaries of $H$ coincide with those of $H/G$. For Poisson
boundaries of genuine groups this recovers a result of
Kaimanovich~\cite{Kai} obtained as an application of his study of
covering Markov operators. We also obtain similar results for exact
sequences of C$^*$-tensor categories in the framework of categorical
random walks recently developed in~\cite{NY}.

\medskip

\paragraph{\bf Acknowledgement} {\small The research leading to these results
received funding from the European Research Council under the
European Union's Seventh Framework Programme (FP/2007-2013) / ERC
Grant Agreement no. 307663. It was carried out during the authors'
visits to the University of Tokyo and Ochanomizu University. The
preparation of the paper was completed during the second author's
visit to the Texas A\&M University. The authors are grateful to the
staff of these universities for hospitality and to Yasuyuki
Kawahigashi, Makoto Yamashita and Ken Dykema for making these visits
possible. Special thanks go to Makoto Yamashita for inspiring
conversations.}

\bigskip

\section{Invariance of harmonic functions under finite quantum groups}
\label{sec:inv}

Let $H$ be a discrete quantum group, with the von Neumann algebra
$\ell^\infty(H)$ of bounded functions and comultiplication
$\Delta_H\colon \ell^\infty(H)\to
\ell^\infty(H)\bar\otimes\ell^\infty(H)$, see e.g.~\cite{VD}. Recall
that this implies that $\ell^\infty(H)$ is the $\ell^\infty$-direct
sum $\ell^\infty\mhyph\bigoplus_{s\in I}B(H_s)$ of full matrix
algebras, where $I$ is the set of equivalence classes of irreducible
representations of the dual compact quantum group $\hat H$. When $H$ is a genuine group, we have $I=H$ and the comultiplication is given by $\Delta_H(f)(g,h)=f(gh)$ for $f\in\ell^\infty(H)$ and $g,h\in H$.

For a normal state $\phi\in\ell^\infty(H)_*$, consider the
convolution operator
$$
P_\phi\colon\ell^\infty(H)\to\ell^\infty(H), \ \
P_\phi=(\phi\otimes\iota)\Delta_H.
$$
An element $x\in\ell^\infty(H)$ is called $P_\phi$-\emph{harmonic}
if $P_\phi(x)=x$. Note that since $P_\phi$ can be thought of as a
matrix of completely positive maps $B(H_s)\to B(H_t)$, it also makes
sense to talk about positive harmonic elements in the algebra
$\prod_{s\in I}B(H_s)$ of all functions on $H$.

We denote by $\phi^n$ the convolution powers of $\phi$, defined
inductively by $\phi^{n+1}=(\phi^n\otimes\phi)\Delta_H$. Then
$P^n_\phi=P_{\phi^n}$.

\smallskip

Assume now that $G$ is a finite normal quantum subgroup of $H$. This
means that $\ell^\infty(G)$ is finite dimensional and either of the
following equivalent conditions is satisfied~\cite{VV},\cite{KKS}:
\begin{enumerate}
\item[(i)] we are given a surjective normal $*$-homomorphism $\pi\colon \ell^\infty(H)\to\ell^\infty(G)$
respecting the coproducts such that the fixed point algebra $\ell^\infty(G\backslash H)=\{x\mid \alpha_l(x)=1\otimes x\}$ under
the left action $\alpha_l=(\pi\otimes\iota)\Delta_H$ of $G$ on
$\ell^\infty(H)$ coincides with the fixed point algebra
$\ell^\infty(H/G)=\{x\mid \alpha_l(x)=x\otimes 1\}$ under the right action
$\alpha_r=(\iota\otimes\pi)\Delta_H$;
\item[(ii)] we are given an embedding $\C[\hat G]\to \C[\hat H]$ of the Hopf $*$-algebras of regular functions
on the dual compact quantum groups such that $\C[\hat G]$ is
invariant under the left and/or right adjoint action of the Hopf
algebra $\C[\hat H]$ on itself.
\end{enumerate}
Then the quotient discrete quantum group $\Gamma=H/G$ is defined by
letting
$\ell^\infty(\Gamma)=\ell^\infty(H/G)=\ell^\infty(G\backslash H)$
and the coproduct to be the restriction of $\Delta_H$ to
$\ell^\infty(\Gamma)$.

%\smallskip

%The following is our main technical result.

\begin{theorem}\label{thm:invariance}
Assume $H$ is a discrete quantum group, $G$ is a finite normal
quantum subgroup of $H$, and~$\phi$ is a generating normal state on
$\ell^\infty(H)$, meaning that
$\vee_{n\ge1}\operatorname{supp}\phi^{n} =1$. Then any positive,
possibly unbounded, $P_\phi$-harmonic function on $H$ is
$G$-invariant.
\end{theorem}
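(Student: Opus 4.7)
My plan has three steps. First, I will reformulate $G$-invariance as a fixed-point condition for a conditional expectation built from the Haar state of~$G$. Second, I will produce a family of auxiliary positive harmonic functions parametrised by states of $\ell^\infty(G)$ and derive a key identity for their $G$-averages. Third, I will use the generating hypothesis to collapse this family back to~$x$.

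Let $J:=h_G\circ\pi\in\ell^\infty(H)_*$. Since $h_G$ is the Haar state of the finite quantum group~$G$ and $\pi$ is a bialgebra map, $J$ is an idempotent state ($J*J=J$), and its convolution operator $P_J$ coincides with the canonical conditional expectation
\[E=(h_G\otimes\iota)\alpha_l\colon\ell^\infty(H)\to\ell^\infty(H/G)\]
onto the $G$-invariant subalgebra; $G$-invariance of~$x$ is therefore the equality $E(x)=x$. The assumption that $G$ is normal in~$H$ is equivalent to $\ell^\infty(H/G)=\ell^\infty(G\backslash H)$ and, in convolution-theoretic terms, to the central-type identity $J*\phi=\phi*J$: both sides send $y\in\ell^\infty(H)$ to $\phi(E(y))$. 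Consequently $E$ and $P_\phi$ commute, and $E(x)$ is itself positive and $P_\phi$-harmonic.

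For any positive functional $\omega\in\ell^\infty(G)_*$ set
\[x_\omega:=(\omega\otimes\iota)\alpha_l(x)=P_{\omega\pi}(x)\in\prod_{s\in I}B(H_s).\]
A coassociativity computation using $P_\phi x=x$ shows
\begin{equation*}
P_\phi(x_\omega)=P_{\phi*\omega\pi}(x)=(\omega\pi\otimes\iota)(P_\phi\otimes\iota)\Delta_H(x)=(\omega\pi\otimes\iota)\Delta_H(P_\phi x)=x_\omega,
\end{equation*}
so $x_\omega$ is again positive and $P_\phi$-harmonic. Combining the coaction identity $(\iota\otimes\alpha_l)\alpha_l=(\Delta_G\otimes\iota)\alpha_l$ with the Haar invariance $\omega*h_G=\omega(1)h_G$ yields the further identity $E(x_\omega)=\omega(1)E(x)$. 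Since $x=x_{\epsilon_G}$ for the counit $\epsilon_G$ (a state), and since positive functionals span the finite-dimensional dual $\ell^\infty(G)^*$, the conclusion $\alpha_l(x)=1\otimes x$ becomes equivalent to the single assertion that $x_\omega=\omega(1)x$ for every state~$\omega$.

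The main step is this last assertion, and it is where the generating hypothesis enters. Faithfulness of $h_G$ on the finite-dimensional $\ell^\infty(G)$ gives $\omega\le C_\omega h_G$ for some $C_\omega>0$, whence $0\le x_\omega\le C_\omega E(x)$; together with $E(x_\omega)=\omega(1)E(x)$ this exhibits $x_\omega$ and $\omega(1)x$ as two positive $P_\phi$-harmonic elements of $\prod_{s\in I}B(H_s)$ sharing the same $E$-image and trapped beneath a common multiple of~$E(x)$. To finish, I would iterate $P_\phi$ on the signed harmonic element $z_\omega:=x_\omega-\omega(1)x\in\ker E$ and invoke $\bigvee_{n\ge1}\mathrm{supp}\,\phi^n=1$: as $n$ grows, $\phi^n$ spreads across the finite-dimensional piece $\ell^\infty(G)\subset\ell^\infty(H)$, and combined with the faithfulness of $E$ on positive elements this should force $z_\omega=0$. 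The principal obstacle is that $x$ is only assumed to lie in $\prod_{s\in I}B(H_s)$ and may be unbounded, so the squeezing argument cannot rely on uniform norm bounds and has to be performed componentwise in each $B(H_s)$, with a Jordan decomposition to handle the signs of~$z_\omega$; identifying the correct adaptation of Kaimanovich's classical finite-state-chain argument to this unbounded, non-commutative setting is the main technical difficulty.
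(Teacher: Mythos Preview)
Your setup is sound and matches the paper: the conditional expectation $E=(h_G\otimes\iota)\alpha_l=(\iota\otimes h_G)\alpha_r$ commutes with $P_\phi$ by normality, so the problem reduces to showing that $\ker E$ contains no nonzero harmonic element. However, there is a computational slip in Step~2: with the convention $P_\phi P_\psi=P_{\psi*\phi}$ one gets $P_\phi(x_\omega)=P_{(\omega\pi)*\phi}(x)$, not $P_{\phi*(\omega\pi)}(x)$, so your chain of equalities actually computes $P_{\omega\pi}(P_\phi x)=x_\omega$, which is trivial, rather than $P_\phi(x_\omega)=x_\omega$. This is fixable by using the right action $\tilde x_\omega=(\iota\otimes\omega\pi)\Delta_H(x)$ instead, but in any case the whole $x_\omega$ construction only reformulates the target statement; it does not advance toward it.

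The real gap is Step~3, which you yourself flag as unfinished. Iterating $P_\phi$ on the harmonic element $z_\omega$ does nothing, and ``$\phi^n$ spreads across $\ell^\infty(G)$'' is not an argument: $P_{\phi^n}(z_\omega)=z_\omega$ for every $n$, so the generating hypothesis cannot be used this way. The paper supplies the missing mechanism. First replace $\phi$ by $\sum_{n\ge1}2^{-n}\phi^n$ to make it faithful. Then split $\phi=t\phi_1+(1-t)\phi_2$ along the support projection $p$ of $\pi$, so that $P_{\phi_1}=(\nu\otimes\iota)\alpha_l$ for a \emph{faithful} state $\nu$ on $\ell^\infty(G)$. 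Finite-dimensionality gives $\nu\ge\delta h_G$ for some $\delta>0$, whence on $\ker E$ one has $P_{\phi_1}=((\nu-\delta h_G)\otimes\iota)\alpha_l$ and thus $\|P_{\phi_1}|_{\ker E}\|\le 1-\delta$; since $P_{\phi_2}$ is contractive, $P_\phi|_{\ker E}$ is a strict contraction and has no nonzero fixed point. This disposes of the bounded case. For unbounded positive harmonic $a\ge1$ the paper does \emph{not} argue componentwise or via Jordan decomposition as you suggest; instead it sets $b=E(a)$, observes $a\le(\dim C(G))\,b$ so that $c=b^{-1/2}ab^{-1/2}$ is bounded, and applies the bounded case to $c$ with respect to the Doob transform $P_\phi^b=b^{-1/2}P_\phi(b^{1/2}\cdot b^{1/2})b^{-1/2}$, which again decomposes as $tP_{\phi_1}+(1-t)P_{\phi_2}^b$.
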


For genuine discrete groups and bounded harmonic functions this was
proved by Kaimanovich~\cite[Theorem~3.3.1 and Corollary~3]{Kai}
using measure-theoretic methods.

\begin{remark} \label{rem:actions}
The left and right actions of $G$ are both well-defined on the
algebra of all functions on~$H$. In order to see this, let us take
(i) above as our main definition, so we assume that we are given a
surjective normal $*$-homomorphism $\pi\colon
\ell^\infty(H)\to\ell^\infty(G)$ respecting the coproducts. Recall
that $\ell^\infty(H)=\ell^\infty\mhyph\bigoplus_{s\in I}B(H_s)$,
where $I$ is the set of equivalence classes of irreducible
representations of $\hat H$. For every $s\in I$ fix a representative
$U_s\in B(H_s)\otimes C(\hat H)$. Let $p$ be the support of $\pi$,
that is, we have $\ker \pi=(1-p)\ell^\infty(H)$. Then
$p\ell^\infty(H)=\bigoplus_{s\in I_G}B(H_s)$ for a finite subset
$I_G\subset I$, which we can identify with the set of equivalence
classes of irreducible representations of $\hat G$. The tensor
products of the representations $U_s$, $s\in I_G$, decompose
according to the fusion rules of $\hat G$ (and, moreover, the
subcategory of $\Rep \hat H$ generated by these representations can
be identified with $\Rep\hat G$). It follows that by writing $r\sim
s$ if there exists $t\in I_G$ such that $U_s$ is a subrepresentation
of  $U_r\otimes U_t$, we get a well-defined equivalence relation on
$I$ with finite equivalence classes. If $S$ is an equivalence class,
then the action $\alpha_r=(\iota\otimes\pi)\Delta_H$ of $G$ on
$\ell^\infty(H)$ defines by restriction an action on
$\bigoplus_{s\in S}B(H_s)$. From this we see that the action
$\alpha_r$ of $G$ on $\ell^\infty(H)$ extends in an obvious way to
an action on the whole algebra $\prod_{s\in I}B(H_s)$ of functions
on $H$. Similar considerations apply to the left action $\alpha_l$.
Note in passing that using the normality of $G$ it can be checked
that the corresponding equivalence relation on~$I$ is the same as
for $\alpha_r$, see also Section~\ref{sec:cat} for a stronger
statement.
\end{remark}

\bp[Proof of Theorem~\ref{thm:invariance}] Replacing $\phi$ by
$\sum_{n\ge1}\phi^{n}/2^{n}$, which might only increase the set of
harmonic elements, we may assume that $\phi$ is faithful.

Consider first bounded harmonic elements. As in
Remark~\ref{rem:actions}, consider the support $p$ of
$\pi\colon\ell^\infty(H)\to\ell^\infty(G)$. Define the states
$$
\phi_1=\phi(p)^{-1}\phi(p\,\cdot)\ \ \text{and}\ \
\phi_2=\phi(1-p)^{-1}\phi((1-p)\,\cdot).
$$
Denote by $\nu$ the state on $\ell^\infty(G)$ such that
$\phi_1(x)=\nu(\pi(x))$. Then, with $t=\phi(p)$, we have
$$
P_\phi=t
P_{\phi_1}+(1-t)P_{\phi_2}=t(\nu\otimes\iota)\alpha_l+(1-t)P_{\phi_2}.
$$

Consider the conditional expectation
$$
E=(h\otimes\iota)\alpha_l=(\iota\otimes
h)\alpha_r\colon\ell^\infty(H)\to\ell^\infty(H/G),
$$
where $h$ is the Haar state on $C(G)$. It obviously commutes with
$P_{\phi_1}$ and $P_{\phi_2}$, so it suffices to show that there are
no nonzero $P_\phi$-harmonic elements in $\ker E$. Since
$P_{\phi_2}$ is a contraction, for this, in turn, it suffices to
show that the restriction of $P_{\phi_1}$ to $\ker E$ is a strict
contraction.

Since $\ell^\infty(G)$ is finite dimensional and the state $\nu$ is
faithful, there exists $\delta>0$ such that $\nu-\delta h\ge0$. On
$\ker E$ we have
$$
P_{\phi_1}=(\nu\otimes\iota)\alpha_l=((\nu-\delta
h)\otimes\iota)\alpha_l.
$$
This implies that $\|P_{\phi_1}|_{\ker E}\|\le(\nu-\delta
h)(1)=1-\delta$, which is what we need.

\smallskip

Assume now that $a$ is an unbounded positive $P_\phi$-harmonic
element. By adding $1$ we may assume that $a\ge1$. Put $b=E(a)$.
Then $b\ge1$ is again a $P_\phi$-harmonic element. Since it is
$G$-invariant, it is also $P_{\phi_2}$-harmonic. Since $h\ge(\dim
C(G))^{-1}\eps$, where $\eps$ is the counit on $C(G)$, we also have
$b\ge (\dim C(G))^{-1}a$. It follows that the element $c=b^{-1/2}
ab^{-1/2}$ is bounded.

Consider the Doob transform
$$
P^b_\phi=b^{-1/2}P_\phi(b^{1/2}\cdot b^{1/2})b^{-1/2}
$$
of $P_\phi$ defined by $b$. It is a well-defined ucp map on
$\ell^\infty(H)$, and the element $c$ is $P^b_\phi$-harmonic. As the
element~$b$ is $G$-invariant, the operator $P_\phi^b$ commutes with
$E$ and we have $P_{\phi_1}^b=P_{\phi_1}$, so that $P^b_\phi=t
P_{\phi_1}+(1-t)P_{\phi_2}^b$. Now the same argument as in the first
part of the proof applies and we conclude that
$c\in\ell^\infty(H/G)$. Hence $a=b^{ 1/2}cb^{1/2}$ is $G$-invariant.
\ep

Note that, as was already remarked by Kaimanovich~\cite{Kai}, the normality condition in this theorem cannot be dropped. Indeed, otherwise the Poisson boundaries of free products of finite groups would be trivial, which is not true, since such free products are nonamenable except in a few trivial cases.

\smallskip

It is nevertheless tempting to think that Theorem~\ref{thm:invariance} should be
true in a greater generality and that under suitable irreducibility
conditions any harmonic element with respect to a $G$-equivariant
ucp map on a C$^*$-algebra must be $G$-invariant. However, Theorem
4.3.3 in \cite{Kai} shows that the question what such optimal
conditions could be is quite delicate. We will strengthen
Theorem~\ref{thm:invariance} in a somewhat different direction by
showing that the main part of the argument generalizes from finite
to compact quantum groups.

\begin{proposition}\label{prop:invariance}
Let $\alpha\colon A\to C(G)\otimes A$ be an action of a compact
quantum group $G$ with faithful Haar state on a unital C$^*$-algebra
$A$, and $P\colon A\to A$ be a ucp map satisfying the following
properties:
\begin{enumerate}
\item[(i)] $P$ commutes with the unique $G$-equivariant conditional
expectation $E\colon A\to A^G$;
\item[(ii)] $P$ can be written as a convex combination $t P_1+(1-t)P_2$,
$0<t<1$, of two ucp maps such that $P_1=(\nu\otimes\iota)\alpha$ for
some faithful state $\nu$ on $C(G)$.
\end{enumerate}
Then any $P$-harmonic element in $A$ is $G$-invariant.
\end{proposition}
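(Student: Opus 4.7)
My plan is to adapt the bounded-harmonic part of the proof of Theorem~\ref{thm:invariance}, substituting for the finite-dimensional estimate $\nu - \delta h \ge 0$ an iterated convergence of convolution powers of $\nu$.

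First, I would check that $P_1$ commutes with the invariant conditional expectation $E=(h\otimes\iota)\alpha$: left-invariance of the Haar state gives $(\nu\otimes h)\Delta_G = h$, so coassociativity of $\alpha$ yields $EP_1 = E$, and since $E(A) = A^G$ is fixed pointwise by $P_1$ we also have $P_1 E = E$. Together with assumption (i) this forces $EP_2 = P_2 E$ as well, so all of $P_1$, $P_2$, and $P$ preserve $\ker E$. For any $P$-harmonic $a$, writing $a = E(a) + (a - E(a))$ splits it into a $G$-invariant $P$-harmonic element and a $P$-harmonic element in $\ker E$, reducing the proposition to the statement that $\ker E$ contains no nonzero $P$-harmonic element.

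Let $x \in \ker E$ satisfy $Px = x$. Expanding $P^n = (tP_1 + (1-t)P_2)^n$ as a convex combination of $2^n$ ucp (hence contractive) compositions, and isolating the single all-$P_1$ term of weight $t^n$, I get
\[
\|x - t^n P_1^n(x)\| \le (1-t^n)\|x\|.
\]
Combined with $\|P_1^n(x)\| \le \|x\|$ this forces $\|P_1^n(x)\| = \|x\|$ for every $n$.

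To reach a contradiction I would use the identity $P_1^n = (\nu^{*n}\otimes\iota)\alpha$ together with the compact quantum group analogue of the Kawada--It\^o theorem: for a faithful state $\nu$ on $C(G)$, the operator $(\iota\otimes\nu)(u^\pi) \in B(H_\pi)$ has spectral radius strictly less than one on every nontrivial irreducible corepresentation $u^\pi$ of $G$, so $\nu^{*n} \to h$ pointwise on the Hopf $*$-algebra $\mathcal{O}(G)$. By density of the Podle\'s spectral subalgebra $\A = \alpha^{-1}(\mathcal{O}(G) \otimes_{\mathrm{alg}} A) \subset A$ and the uniform bound $\|P_1^n\| \le 1$, a three-epsilon argument then propagates this to $P_1^n \to E$ pointwise in norm on $A$. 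Applied to $x$, this gives $P_1^n(x) \to 0$, contradicting $\|P_1^n(x)\| = \|x\|$ unless $x = 0$. The main obstacle, and the genuine departure from the finite case, is precisely the Kawada--It\^o input: in Theorem~\ref{thm:invariance} the analogous spectral gap was available for free through the domination $\nu \ge \delta h$, while for general compact $G$ it relies on both the faithfulness of $\nu$ and of the Haar state.
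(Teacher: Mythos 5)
Your proof is correct, but it takes a genuinely different route from the paper's. The paper makes the same reduction to showing that $\ker E$ contains no nonzero $P$-harmonic element, but then localizes: it takes a state $\psi$ on $A^G$ made $P$-invariant by a Ces\`aro limit (using the Schwarz inequality $\langle P(y),P(y)\rangle\le P(\langle y,y\rangle)$ in the pre-Hilbert $A^G$-module $\ker E$ with $\langle y,z\rangle=E(y^*z)$), passes to the associated Hilbert space $H_\psi$, on which $G$ acts by a unitary representation with no invariant vectors, and applies Lemma~\ref{lem:contract} to make $P_1$ a strict contraction on $H_\psi$ while $P_2$ remains a contraction there; the identity $x=tP_1(x)+(1-t)P_2(x)$ then gives the contradiction. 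You instead stay entirely in the C$^*$-norm: the expansion of $P^n$ and the weight-$t^n$ term force $\|P_1^n(x)\|=\|x\|$, while $\nu^{*n}\to h$ on $\C[G]$ together with Podle\'s density and the uniform bound $\|P_1^n\|\le 1$ force $P_1^n\to E$ in the point-norm topology. Note that the Kawada--It\^o input you invoke as a black box is precisely the paper's Lemma~\ref{lem:contract} (in the stronger norm form $\|(\iota\otimes\nu)(U)\|<1$), whose short proof uses only the faithfulness of $\nu$ and the counit --- not the faithfulness of $h$, contrary to your closing remark --- so the two arguments share their crux. Your route is more elementary (no Hilbert modules, no invariant states) and yields the stronger conclusion that $P_1^n\to E$ pointwise in norm, letting you dispose of $P_2$ by bare contractivity; the paper's route needs the contraction lemma only for the single representation generated by the harmonic element, avoids the spectral-subalgebra approximation, and is the form of the argument that transports to the Hilbert-module and categorical settings used later in the paper.
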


The proof is based on the following lemma.

\begin{lemma}\label{lem:contract}
Let $G$ be a compact quantum group, $\nu$ be a faithful state on
$C(G)$ and $U\in B(H_U)\otimes C(G)$ be a finite dimensional unitary
representation without nonzero invariant vectors. Then $\|(\iota
\otimes \nu)(U)\|<1$.
\end{lemma}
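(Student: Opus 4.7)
The plan is to argue by contradiction: assume $\|T\|=1$ for $T=(\iota\otimes\nu)(U)$, and deduce that $U$ has a nonzero invariant vector. Since $H_U$ is finite dimensional, the norm of $T$ is attained, so one may choose $\xi\in H_U$ with $\|\xi\|=\|T\xi\|=1$.

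Fix an orthonormal basis $\{e_i\}$ of $H_U$ and write $U=\sum_{ij}e_{ij}\otimes u_{ij}$ and $\xi=\sum_i\xi_i e_i$. The key auxiliary objects are the elements $v_i=\sum_j u_{ij}\xi_j\in C(G)$, which satisfy $U(\xi\otimes 1)=\sum_i e_i\otimes v_i$, while the $i$-th coordinate of $T\xi$ is $\nu(v_i)$. Unitarity $U^*U=1$ yields $\sum_i v_i^*v_i=1$, and applying the Cauchy--Schwarz inequality $|\nu(a)|^2\le\nu(a^*a)$ to each $v_i$ and summing gives
\[
1=\|T\xi\|^2=\sum_i|\nu(v_i)|^2\le\sum_i\nu(v_i^*v_i)=\nu(1)=1.
\]
Equality must therefore hold in the Cauchy--Schwarz inequality for every $i$, and since $\nu$ is faithful this forces $v_i=\nu(v_i)\cdot 1$ in $C(G)$.

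Unpacking these scalar identities says exactly that $U(\xi\otimes 1)=T\xi\otimes 1$ inside $H_U\otimes C(G)$. To finish, apply $\iota\otimes h$, where $h$ is the Haar state of $G$: the left-hand side becomes $p\xi$, where $p=(\iota\otimes h)(U)\in B(H_U)$ is the standard projection onto the invariant vectors of $U$, while the right-hand side equals $T\xi$. By hypothesis $p=0$, so $T\xi=0$, contradicting $\|T\xi\|=1$. The one nonroutine step is the equality analysis in Cauchy--Schwarz, which is precisely where faithfulness of $\nu$ is used; everything else reduces to bookkeeping with matrix coefficients together with the standard fact that $(\iota\otimes h)(U)$ is the invariance projection.
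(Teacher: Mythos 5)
Your proof is correct and follows essentially the same strategy as the paper's: argue by contradiction, use faithfulness of $\nu$ to upgrade the saturated inequality to the algebraic identity $U(\xi\otimes 1)=(\text{vector})\otimes 1$, and then exhibit the contradiction with the absence of invariant vectors. The only cosmetic differences are that you extract the identity coordinate-wise via the equality case of Cauchy--Schwarz (the paper instead observes that the contraction $x=(\omega_{\xi,\zeta}\otimes\iota)(U)$ satisfies $\nu(x)=1$, hence $x=1$), and you finish with the Haar-state projection $(\iota\otimes h)(U)$ where the paper applies the counit.
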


\begin{proof}
Assume $\|(\iota\otimes\nu)(U)\|=1$. Then there exist unit vectors
$\xi,\zeta\in H_U$ such that for the linear functional
$\omega_{\xi,\zeta}=(\cdot\,\xi,\zeta)$ on $B(H_U)$ we have
$(\omega_{\xi,\zeta}\otimes\nu)(U)=1$. In other words, $\nu(x)=1$
for the contraction $x=(\omega_{\xi,\zeta}\otimes\iota)(U)\in C(G)$.
Since $\nu$ is a faithful state, it follows that $x=1$. Applying the
counit $\eps$ on $\C[G]$ to the identity
$(\omega_{\xi,\zeta}\otimes\iota)(U)=1$, we get $(\xi,\zeta)=1$, so
$\xi=\zeta$, and then $U(\xi\otimes1)=\xi\otimes1$. Therefore $\xi$
is an invariant vector, which is a contradiction.
\end{proof}

\bp[Proof of Proposition~\ref{prop:invariance}] As in the proof of
Theorem~\ref{thm:invariance}, it suffices to show that if $x\in\ker
E$ is $P$-harmonic, then $x=0$. Put $B=A^G$ and consider $\ker E$ as
a right pre-Hilbert $B$-module with the inner product $\langle
y,z\rangle =E(y^*z)$. Note that since the Haar state $h$ on $C(G)$
is assumed to be faithful, the conditional expectation
$E=(h\otimes\iota)\alpha\colon A\to B$ is faithful as well. We will
show that $\psi(\langle x,x\rangle)=0$ for any state $\psi$ on $B$.
Assume this is not the case for some $\psi$.

First of all note that by Schwarz's inequality for ucp maps we have
\begin{equation} \label{eq:sch}
\langle P(y),P(y)\rangle=E(P(y)^*P(y))\le
EP(y^*y)=PE(y^*y)=P(\langle y,y\rangle)
\end{equation}
for all $y\in\ker E$. In particular, $$ \psi(\langle
x,x\rangle)\le\psi P(\langle x,x\rangle).
$$
It follows that if we replace $\psi$ by any weak$^*$ limit point of
the states $n^{-1}\sum^{n-1}_{k=0}\psi P^k$ as $n\to\infty$, then
this might only increase the value of $\psi$ at $\langle
x,x\rangle$. Therefore we may assume that $\psi$ is $P$-invariant,
or equivalently, $P_2$-invariant.

Consider now the Hilbert space $H_\psi$ defined by the space $\ker
E$ equipped with the pre-inner product $(y,z)=\psi(\langle z,y
\rangle)$. Then $H_\psi$ becomes a left unitary $C(G)$-comodule. In
other words, there exists a unitary representation $U\in
M(K(H_\psi)\otimes C(G))$ of $G$ such that if $\Lambda_\psi\colon
\ker E\to H_\psi$ denotes the canonical map, then
$$
(\iota\otimes\Lambda_\psi)\alpha(y)=
U^*_{21}(1\otimes\Lambda_\psi(y))
$$
for all $y\in \ker E$, and hence
\begin{equation*}
\Lambda_\psi P_1(y)=(\iota\otimes\nu)(U^*)\Lambda_\psi(y).
\end{equation*}
The representation $U$ has no nonzero invariant vectors, since there
are no nonzero $G$-invariant vectors in $\ker E$. Decomposing $U$
into a direct sum of finite dimensional representations, by
Lemma~\ref{lem:contract} we conclude that
$$
\|(\iota\otimes\nu)(U^*)\xi\|<\|\xi\|
$$
for any nonzero vector $\xi\in H_\psi$. In particular, we have
\begin{equation} \label{eq:contr}
\|\Lambda_\psi P_1(x)\|<\|\Lambda_\psi(x)\|.
\end{equation}

On the other hand, by applying inequality \eqref{eq:sch} to $y=x$
and $P_2$ instead of $P$ and using the $P_2$-invariance of $\psi$,
we get
$$
\|\Lambda_\psi P_2(x)\|\le\|\Lambda_\psi(x)\|.
$$
But together with \eqref{eq:contr} this contradicts the equality
$tP_1(x)+(1-t)P_2(x)=x$. \ep

\bigskip

\section{Probabilistic boundaries}

Assume as in the previous section that $H$ is a discrete quantum
group and $\phi$ is a normal state on $\ell^\infty(H)$. Consider the
space $H^\infty(H;\mu)\subset\ell^\infty(H)$ of bounded
$P_\phi$-harmonic elements. As was shown by Izumi~\cite{I1,I3}, it
is a von Neumann algebra with the new product
$$
x\cdot y=s^*\mhyph\lim_{n\to \infty}P^n_\phi(xy).
$$
It is called (the algebra of bounded measurable functions on) the
\emph{Poisson boundary} of $H$. In this notation the first part of
Theorem~\ref{thm:invariance} states that if $G\subset H$ is a finite
normal quantum subgroup, then for any generating normal state $\phi$
we have
\begin{equation}
H^\infty(H;\phi)=H^\infty(H/G;\bar\phi),
\end{equation}
where $\bar \phi$ is the restriction of $\phi$ to
$\ell^\infty(H/G)\subset\ell^\infty(H)$.

\smallskip

Recall next the definition of the Martin boundary~\cite{NT1}. For
this we have to consider only normal states $\phi$ that are
invariant under the left adjoint action of $\hat H$ on
$\ell^\infty(H)$. In other words, if
$\ell^\infty(H)=\ell^\infty\mhyph\bigoplus_{s\in I}B(H_s)$, then we
consider the states of the form
$$
\phi_\mu=\sum_{s\in I}\mu(s)\phi_s,\ \
\phi_s=\frac{\Tr(\cdot\,\rho^{-1})}{\Tr(\rho^{-1})}\in B(H_s)^*,
$$
where $\mu$ is a probability measure on $I$ and $\rho$ is the
Woronowicz character $f_1$ for $\hat H$. These are precisely the
states~$\phi$ such that the operator $P_{\phi}$ leaves the center
$\ell^\infty(I)$ of $\ell^\infty(H)$ globally invariant, so that it
defines a classical random walk on $I$. To simplify the notation we
will write $P_\mu$ for $P_{\phi_\mu}$.

Assume now that $\mu$ is generating, that is, $\phi_\mu$ is
generating. We also assume that the classical random walk on $I$ is
transient, or equivalently, the Green kernel
$$
G_\mu\colon c_c(H)=\bigoplus_{s\in I}B(H_s)\to\ell^\infty(H),\ \
G_\mu(x)=\sum^\infty_{n=0}P_\mu^n(x),
$$
is well-defined. This is automatically the case if $H$ is not of Kac
type or, more generally, if the quantum dimension function is
nonamenable. Denote by $I_0\in\ell^\infty(H)$ the unit in the matrix
block corresponding to the counit, that is, $I_0$ is characterized
by the property $x I_0=\eps(x)I_0$ for $x\in\ell^\infty(H)$. Then
the function $G_\mu(I_0)\in\ell^\infty(I)$ has no zeros, and the
Martin kernel is defined as the completely positive map
$$
K_\mu\colon c_c(H)\to\ell^\infty(H),\ \ K_\mu(x)=G_\mu(I_0)^{-1}G_\mu(x).
$$

The antipode defines an involution $s\mapsto \bar s$ on the set $I$.
For a measure $\mu$ on $I$, we denote by $\check\mu$ the measure
such that $\check\mu(s)=\mu(\bar s)$. If $\mu$ is transient, then
$\check\mu$ is transient as well.

Consider the C$^*$-subalgebra of $\ell^\infty(H)$ generated by
$c_0(H)=c_0\mhyph\bigoplus_{s\in I}B(H_s)$ and
$K_{\check\mu}(c_c(H))$. Its quotient by $c_0(H)$ is called the
\emph{Martin boundary} of $H$, and we denote it by $C(\partial
H_{M,\mu})$.

\begin{theorem} \label{thm:martin}
Let $H$ be a discrete quantum group,
$\ell^\infty(H)=\ell^\infty\mhyph\bigoplus_{s\in I}B(H_s)$. Assume
$G\subset H$ is a finite normal quantum subgroup. Consider the
quotient quantum group $H/G$,
$\ell^\infty(H/G)=\ell^\infty\mhyph\bigoplus_{t\in\bar I}B(H_t)$.
Then for any transient generating finitely supported probability
measure $\mu$ on $I$, the embedding
$\ell^\infty(H/G)\hookrightarrow\ell^\infty(H)$ induces an
isomorphism $C(\partial (H/G)_{M,\bar\mu})\cong C(\partial
H_{M,\mu})$, where $\bar\mu$ is the measure on $\bar I$
characterized by $\phi_{\bar\mu}=\phi_\mu|_{\ell^\infty(H/G)}$.
\end{theorem}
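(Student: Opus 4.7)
The plan is to compare the two Martin compactifications via the natural embedding $\ell^\infty(H/G)\hookrightarrow\ell^\infty(H)$. Since $\Delta_H$ restricts to $\Delta_{H/G}$ on $\ell^\infty(H/G)$ and $\bar\mu$ is defined so that $\phi_{\bar\mu}=\phi_\mu|_{\ell^\infty(H/G)}$, one has $P_\mu|_{\ell^\infty(H/G)}=P_{\bar\mu}$, whence the Green kernels satisfy $G_\mu(y)=G_{\bar\mu}(y)$ for $y\in c_c(H/G)$, and likewise for $\check\mu$. Under the embedding, the identity $I_0^{H/G}$ of the trivial block of $H/G$ is identified with the support projection $p$ of $\pi\colon\ell^\infty(H)\to\ell^\infty(G)$. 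Setting $g:=K_{\check\mu}(p)=G_{\check\mu}(p)/G_{\check\mu}(I_0)\in\ell^\infty(I)$, one then obtains $K_{\check{\bar\mu}}(y)=g^{-1}K_{\check\mu}(y)$ for $y\in c_c(H/G)$. Since $p\geq I_0$ we have $g\geq 1$, so both $g$ and $g^{-1}$ lie in the C$^*$-subalgebra of $\ell^\infty(H)$ generated by $c_0(H)$ and $K_{\check\mu}(c_c(H))$; the embedding thus induces a well-defined $*$-homomorphism $C(\partial(H/G)_{M,\bar\mu})\to C(\partial H_{M,\mu})$.

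Injectivity is immediate, as any element of $\ell^\infty(H/G)$ lying in $c_0(H)$ must lie in $c_0(H/G)$ because the equivalence classes in $\bar I$ from Remark~\ref{rem:actions} are finite. Surjectivity reduces to the following key claim: for any $y\in c_c(H)\cap\ker E$, where $E=(h\otimes\iota)\alpha_l$ is the $G$-invariant conditional expectation onto $\ell^\infty(H/G)$, one has $K_{\check\mu}(y)\in c_0(H)$. Granted the claim, the relation $E(I_0)=cp$ for some positive constant $c$ (the $G$-invariant part of $I_0$ being a scalar multiple of $p$) means $p-c^{-1}I_0\in\ker E$, giving $g\equiv c^{-1}\pmod{c_0(H)}$. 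For arbitrary $x\in c_c(H)$, decomposing $x=E(x)+(x-E(x))$ and using the claim again yields
$$
K_{\check\mu}(x)\equiv K_{\check\mu}(E(x))=g\,K_{\check{\bar\mu}}(E(x))\equiv c^{-1}K_{\check{\bar\mu}}(E(x))\pmod{c_0(H)},
$$
which lies in the image of $C(\partial(H/G)_{M,\bar\mu})$, proving surjectivity.

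The main obstacle is the key claim, which I plan to prove by contradiction using Theorem~\ref{thm:invariance} in its unbounded form. Suppose $K_{\check\mu}(y)\notin c_0(H)$; then there is a sequence $t_n\to\infty$ in $I$ and norm-one functionals $\omega_n$ on $B(H_{t_n})$ with $|\omega_n(K_{\check\mu}(y)(t_n))|\geq\delta>0$. A diagonal extraction, valid since $c_c(H)$ is a countable direct sum of finite-dimensional blocks, gives a subsequence along which $\omega_n(K_{\check\mu}(x)(t_n))$ converges to some $\Psi(x)$ for every $x\in c_c(H)$, with $\Psi$ a positive linear functional on $c_c(H)$. The finite support of $\mu$ ensures that $P_{\check\mu}$ preserves $c_c(H)$, and for $t_n$ eventually outside the support of $x$ the identity $G_{\check\mu}(x)-x=P_{\check\mu}(G_{\check\mu}(x))$ together with the centrality of $G_{\check\mu}(I_0)^{-1}$ gives $K_{\check\mu}(P_{\check\mu}(x))(t_n)=K_{\check\mu}(x)(t_n)$, so $\Psi\circ P_{\check\mu}=\Psi$ in the limit. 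By the standard duality between $c_c(H)$ and $\prod_{s\in I}B(H_s)$, this positive $P_{\check\mu}$-invariant $\Psi$ corresponds to a positive, possibly unbounded, $P_\mu$-harmonic element $f\in\prod_s B(H_s)$. Theorem~\ref{thm:invariance} then forces $f$ to be $G$-invariant; since the duality uses a $G$-invariant trace compatible with $E$, one has $\Psi(y)=\langle E(y),f\rangle=0$, contradicting $|\Psi(y)|\geq\delta$.
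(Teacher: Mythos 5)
Your proof is correct and follows essentially the same route as the paper: both reduce the theorem to the key claim that $K_{\check\mu}(y)\in c_0(H)$ for all $y\in c_c(H)\cap\ker E$ (together with the identities $E(I_0)=\tfrac1n p$ and $K_{\check{\bar\mu}}=K_{\check\mu}(p)^{-1}K_{\check\mu}$ on $c_c(H/G)$), and both deduce that claim from Theorem~\ref{thm:invariance} in its unbounded form applied to the positive harmonic functions representing states on the Martin boundary. The only difference is that the paper invokes the integral representation theorem of Neshveyev--Tuset (\cite[Theorem~3.3]{NT1}) directly, whereas you rederive the needed special case via a compactness/diagonal argument; this works, though for positivity of your limit functional $\Psi$ you should take $y$ self-adjoint and the $\omega_n$ to be states rather than arbitrary norm-one functionals, and the ``standard duality'' step is precisely the Haar-weight pairing $\psi(\,\cdot\,\sigma^\psi_{-i/2}(f))$ used in the paper.
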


\bp As in Section~\ref{sec:inv}, consider the support $p$ of the
homomorphism $\pi\colon\ell^\infty(H)\to\ell^\infty(G)$ and the
$G$-equivariant conditional expectation $E=(\iota\otimes
h\pi)\Delta_H\colon\ell^\infty(H)\to\ell^\infty(H/G)$. We have
$c_0(H)\cap\ell^\infty(H/G)=c_0(H/G)$, and $E$ maps $c_c(H)$ onto
$c_c(H/G)$. We claim that
$$
K_{\check\mu}(x) - nK_{\check{\bar\mu}}(E(x))\in c_0(H)\ \ \text{for any}\ \ x\in c_c(H),
$$
where $n=\dim C(G)$. This obviously proves the theorem.

Note that $p\in\ell^\infty(H/G)$ is exactly the projection defining the counit, so
\begin{equation}\label{eq:m1}
K_{\check{\bar\mu}}(x)=G_{\check\mu}(p)^{-1}G_{\check\mu}(x)
=K_{\check\mu}(p)^{-1}K_{\check\mu}(x)\ \ \text{for}\ \ x\in c_c(H/G)\subset c_c(H).
\end{equation}
Note also that
\begin{equation}\label{eq:m2}
E(I_0)=\frac{1}{n}p,
\end{equation}
since $\pi(I_0)$ is the projection defining the counit on $C(G)$ and
therefore we have $h(\pi(I_0))=1/n$, which can be seen by recalling that if we identify the C$^*$-algebra $C(G)$ with a direct sum of full matrix algebras $B(H_i)$, then the Haar state is the sum of the traces $\frac{\dim H_i}{n}\Tr_{B(H_i)}$.

Next, let $\psi$  be a right invariant Haar weight on
$\ell^\infty(H)$. By \cite[Theorem 3.3]{NT1}, for any state $\omega$
on $C(\partial H_{M,\mu})$ there exists a unique, possibly
unbounded, positive $P_\mu$-harmonic function $x_\omega$ on $H$ such
that
\begin{equation*}\label{kernels}
\psi(y\sigma^\psi_{-i/2}(x_\omega))=\omega (K_{\check \mu}(y))\ \
\text{for all}\ \ y\in c_c(H),
\end{equation*}
where $\sigma^\psi_t= \Ad \rho^{-it}$ is the modular group of
$\psi$. By Theorem~\ref{thm:invariance} we have
$E(x_\omega)=x_\omega$. Note also that $E$ commutes with
$\sigma^\psi_t$. It follows that if $y\in c_c(H)\cap \ker E$, then
$$
\omega( K_{\check\mu}(y))=\psi(y\sigma^\psi_{-i/2}(x_\omega))
=(\psi*
h\pi)(y\sigma^\psi_{-i/2}(x_\omega))=\psi(E(y\sigma^\psi_{-i/2}(x_\omega)))=0.
$$
Since this is true for any $\omega$, we conclude that
\begin{equation} \label{eq:m3}
K_{\check\mu}(y)\in c_0(H)\ \ \text{for all}\ \ y\in c_c(H)\cap\ker E.
\end{equation}

Now, \eqref{eq:m2} and \eqref{eq:m3} show that
$$
n1-  K_{\check\mu}(p)=K_{\check\mu}(nI_0 - p)\in c_0(H).
$$
Using \eqref{eq:m1} and again \eqref{eq:m3}, for any $x\in c_c(H)$
we then get the following equalities modulo $c_0(H)$:
$$
K_{\check\mu}(x)=K_{\check\mu}(E(x))=K_{\check\mu}(p)K_{\check{\bar \mu}}(E(x))
=nK_{\check{\bar\mu}}(E(x)),
$$
which proves our claim.
\ep

Let us give a simple class of noncommutative examples where the
above results can be applied.

\begin{example}
Let $\Gamma$ be a discrete group and $S$ be a finite group acting on
$\Gamma$ by group automorphisms $\gamma\mapsto s.\gamma$. We can
then define a discrete quantum group $H$ with the algebra of bounded
measurable functions $\ell^\infty(H)=\ell^\infty(\Gamma)\rtimes S$
and the coproduct extending the usual coproducts on
$\ell^\infty(\Gamma)$ and $C^*(S)$. This is a quantum group unless
$S$ is an abelian group acting trivially on $\Gamma$. The dual
$G=\hat S$ is a normal quantum subgroup of $H$, with the structure
homomorphism $\pi\colon\ell^\infty(H)\to\ell^\infty(G)=C^*(S)$ given
by $f\lambda_s\mapsto f(e)\lambda_s$, and we have $H/G=\Gamma$. We
thus see that, under suitable assumptions, the Poisson and Martin
boundaries of $H$ coincide with the corresponding classical
boundaries of $\Gamma$.

Note that the $\hat H$-invariant normal states $\phi_\mu$ are
exactly the normal tracial states on $\ell^\infty(\Gamma)\rtimes S$.
It is not difficult to show, see~\cite{N} for a more general
statement, that such traces are given by $S$-invariant probability
measures $\bar\mu$ on $\Gamma$ and tracial states $\tau_\gamma$ on
$C^*(S_\gamma)$, where $S_\gamma\subset S$ is the stabilizer of
$\gamma\in\Gamma$, such that
$\tau_{s.\gamma}(\lambda_s\cdot\lambda_s^*)=\tau_{\gamma}$. Namely,
the trace $\phi_\mu$ corresponding to a pair $(\bar\mu,
(\tau_\gamma)_{\gamma\in\Gamma})$ is given by
$$
\phi_\mu(f\lambda_s)=\sum_{\gamma:\ s\in
S_\gamma}\bar\mu(\gamma)f(\gamma)\tau_\gamma(\lambda_s).
$$
It can be checked that the trace $\phi_\mu$ is faithful if and only
if $\operatorname{supp}\bar\mu=\Gamma$ and every trace $\tau_\gamma$
is faithful. It follows then that, more generally, the trace
$\phi_\mu$ is generating if the set of elements
$\gamma\in\operatorname{supp}\bar\mu$ such that $\tau_\gamma$ is
faithful generates $\Gamma$ as a semigroup. It is clear also that
$\phi_\mu$ is transient if and only if $\bar\mu$ is transient. This
allows one to construct many examples where the assumptions of
Theorem~\ref{thm:martin} are satisfied.

This class of discrete quantum groups $H$ includes the duals of the
group-theoretical easy quantum groups~\cite{RW}. These duals are
obtained by taking $\Gamma$ to be a quotient of $(\Z/2\Z)^{*n}$ and
$S$ to be the symmetric group $S_n$ acting on $\Gamma$ by permuting
the generators.
\end{example}

\bigskip

\section{Categorical analogue} \label{sec:cat}

In this section we will prove an analogue of
Theorem~\ref{thm:invariance} for C$^*$-tensor categories. Our
conventions are the same as in \cite{NT}. Briefly, we assume that
the categories that we consider are small, closed under subobjects
and finite direct sums, and the tensor units are simple unless
explicitly stated otherwise. We also assume that the categories are
strict. We denote the morphisms sets in a category $\CC$ by
$\CC(U,V)$ and write $\CC(U)$ for $\CC(U,U)$.

Recall that for an object $U$ in a C$^*$-tensor category $\CC$, the
conjugate, or dual, object is an object $\bar U$ such that there
exist morphisms $R\colon\un\to \bar U\otimes U$ and $\bar
R\colon\un\to U\otimes\bar U$ solving the conjugate equations
$$
(R^*\otimes\iota)(\iota\otimes\bar R)=\iota,\ \ (\bar
R^*\otimes\iota)(\iota\otimes R)=\iota.
$$
The minimum of the numbers $\|R\|\,\|\bar R\|$ over all solutions is
called the dimension of $U$. We denote the dimension by $d(U)$. A
solution $(R,\bar R)$ is called standard if $\|R\|=\|\bar
R\|=d(U)^{1/2}$. A category in which every object has a dual object
is called rigid.

Fixing a standard solution $(R_U,\bar R_U)$ of the conjugate
equations for every object $U$ we can define maps
$$
\CC(U\otimes V,U\otimes W)\to \CC(V,W), \ \ T\mapsto
(R^*_U\otimes\iota)(\iota\otimes T)(R_U\otimes\iota),
$$
which are denoted by $\Tr_U\otimes\iota$ and called partial
categorical traces. They are independent of the choice of standard
solutions.

\smallskip

Recall next the notion of a harmonic natural
transformation~\cite{NY}. Fix objects $U$ and $V$ and consider the
space $\Nat_b(\iota\otimes U,\iota\otimes V)$ of bounded natural
transformation between the functors $\iota\otimes U$ and
$\iota\otimes V$, that is, a uniformly bounded collection
$\eta=(\eta_X)_X$ of natural in $X$ morphisms $X\otimes U\to
X\otimes V$. For every object $W$ we then define an operator $P_W$
on $\Nat_b(\iota\otimes U,\iota\otimes V)$ by
$$
P_W(\eta)_X=d(W)^{-1}(\Tr_W\otimes\iota)(\eta_{W\otimes X}).
$$
Consider the set $\Irr(\CC)$ of isomorphism classes of simple
objects in $\CC$, and choose for every $s\in\Irr(\CC)$ a
representative $U_s$. For a probability measure $\mu$ on $\Irr(\CC)$
we put
$$
P_\mu=\sum_s\mu(s)P_{U_s}.
$$
A natural transformation $\eta\in\Nat_b(\iota\otimes U,\iota\otimes
V)$ is called $P_\mu$-\emph{harmonic} if $P_\mu(\eta)=\eta$. If
$U=V$ then it makes sense to also talk about unbounded positive
$P_\mu$-harmonic natural transformations.

Define convolution of measures on $\Irr(\CC)$ by
$$
(\nu*\mu)(t)=\sum_{s,r}\nu(s)\mu(r)m^t_{sr}\frac{d(U_t)}{d(U_s)
d(U_r)},
$$
where $m^t_{sr}$ is the multiplicity of $U_t$ in $U_s\otimes U_r$.
Then $P_\mu P_\nu=P_{\nu*\mu}$. We write $\mu^n$ for the $n$th
convolution power of $\mu$.

\smallskip

We next recall a few notions from~\cite{BN}, with obvious
modifications needed in our C$^*$-setting. Let $F\colon\CC\to\CC''$
be a unitary tensor functor between C$^*$-tensor categories. The
functor $F$ is called \emph{normal}, if for every object $U\in\CC$
there exists a subobject $U_0$ such that $F(U_0)$ is the largest
subobject of $F(U)$ which is trivial, that is, isomorphic to $\un^n$
for some $n$. We then denote by $\kker_F\subset\CC$ the full
subcategory consisting of objects $U$ such that $F(U)$ is trivial. A
sequence
$$
\CC'\xrightarrow{i}\CC\xrightarrow{F} \CC''
$$
of unitary tensor functors is called \emph{exact}, if
\begin{enumerate}
\item[(a)] $F$ is dominant, that is, every object of
$\CC''$ is a subobject of $F(U)$ for some $U\in\CC$;
\item[(b)] $F$ is normal;
\item[(c)] $i$ defines an equivalence between $\CC'$ and $\kker_F$.
\end{enumerate}

\smallskip

Given a C$^*$-tensor category $\CC$ and a full C$^*$-tensor
subcategory $\CC'\subset\CC$, let us say that $\CC'$ is
\emph{normal} if there exists an exact sequence $
\CC'\xrightarrow{i}\CC\xrightarrow{F} \CC''$, where $i$ is the
embedding functor. Let us also say that a natural transformation
$\eta$ between the functors $\iota\otimes U$ and $\iota\otimes V$ on
$\CC$ is $\CC'$-\emph{invariant}, if
$$
\eta_{X\otimes Y}=\iota_X\otimes\eta_Y\ \ \text{for all}\ \
X\in\CC'\ \ \text{and}\ \ Y\in\CC.
$$

We are now ready to formulate an analogue of
Theorem~\ref{thm:invariance}.

\begin{theorem} \label{thm:invariance2}
Let $\CC$ be a rigid C$^*$-tensor category, $\CC'\subset\CC$ be a
finite normal C$^*$-tensor subcategory, and~$\mu$ be a generating
probability measure on $\Irr(\CC)$, meaning that
$\cup_{n\ge1}\operatorname{supp}\mu^n=\Irr(\CC)$. Then any positive,
possibly unbounded, $P_\mu$-harmonic natural transformation
$\eta\colon\iota\otimes U\to\iota\otimes U$ is $\CC'$-invariant.
\end{theorem}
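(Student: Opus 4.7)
The plan is to mirror the proof of Theorem~\ref{thm:invariance} in the categorical setting. The finite subcategory $\CC'\subset\CC$ determines a finite subset $\Irr(\CC')\subset\Irr(\CC)$. Replacing $\mu$ by the weighted sum $\sum_{n\ge1}2^{-n}\mu^{n}$, which only enlarges the set of $P_\mu$-harmonic transformations, I may assume $\mu(s)>0$ for every $s\in\Irr(\CC)$. Set $t=\mu(\Irr(\CC'))$ and decompose $P_\mu=tP_{\mu_1}+(1-t)P_{\mu_2}$, where $\mu_1,\mu_2$ are the normalised restrictions of $\mu$ to $\Irr(\CC')$ and its complement.

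The role of the $G$-invariant conditional expectation is played by $E=P_h$, where $h(s)=d(U_s)^{2}/\dim(\CC')$ is the categorical Haar measure on $\Irr(\CC')$ (with $\dim(\CC')=\sum_{s\in\Irr(\CC')}d(U_s)^{2}$). Two properties of $E$ are needed: first, $E(\eta)$ is $\CC'$-invariant, which should follow from the categorical orthogonality relations; second, $E$ commutes with $P_\mu$, hence with $P_{\mu_1}$ and $P_{\mu_2}$. The commutation is where the normality hypothesis enters, as it should imply---analogously to the conclusion noted at the end of Remark~\ref{rem:actions}---that the left and right ``isotypic'' equivalence relations on $\Irr(\CC)$ induced by $\CC'$ coincide, endowing each $\Nat_b(\iota\otimes U,\iota\otimes V)$ with a compatible $\CC'$-bimodule structure under which $E$ commutes with every $P_W$. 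This is the step I expect to require the most care and it is presumably the ``stronger statement'' alluded to in Remark~\ref{rem:actions}.

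With these in hand, the bounded case proceeds exactly as in Theorem~\ref{thm:invariance}: since $\mu_1$ is fully supported on the finite set $\Irr(\CC')$ on which $h$ is also fully supported, there is $\delta>0$ with $\mu_1\ge\delta h$, so $\|P_{\mu_1}|_{\ker E}\|\le 1-\delta$, and combined with the contraction $P_{\mu_2}$ this forces every bounded $P_\mu$-harmonic element of $\ker E$ to vanish.

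For a general positive $P_\mu$-harmonic $\eta$, add $\iota$ to assume $\eta\ge\iota$ pointwise and put $\beta=E(\eta)$; then $\beta\ge\iota$ is $\CC'$-invariant, $P_\mu$-harmonic, and $P_{\mu_1}$-fixed. Because the trivial class $\un\in\Irr(\CC')$ contributes weight $\dim(\CC')^{-1}$ to $h$, positivity of $\eta$ yields $\beta\ge\dim(\CC')^{-1}\eta$, so the assignment $\zeta_X=\beta_X^{-1/2}\eta_X\beta_X^{-1/2}$ is uniformly bounded and $P^\beta_\mu$-harmonic for the Doob transform defined by $\beta$. Since $\beta$ is $\CC'$-invariant, $P^\beta_\mu$ still commutes with $E$ and satisfies $P^\beta_{\mu_1}=P_{\mu_1}$, so the bounded case applies and gives that $\zeta$---and therefore $\eta=\beta^{1/2}\zeta\beta^{1/2}$---is $\CC'$-invariant. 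A secondary technical point requiring care is to verify that multiplication by $\beta^{\pm1/2}$ yields a well-defined natural transformation, which should again follow from the $\CC'$-invariance of $\beta$ together with the bimodule structure above.
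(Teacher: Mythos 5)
Your outline follows the paper's proof step for step: the decomposition $\mu=t\mu_1+(1-t)\mu_2$, the expectation $E=P_{h'}$ where $h'(s)=d(U_s)^2/d(\CC')$ on $\Irr(\CC')$, the estimate $\|P_{\mu_1}|_{\ker E}\|\le 1-\delta$ coming from $\mu_1\ge\delta h'$, and a Doob-transform reduction for unbounded $\eta$ (which the paper also only sketches). The genuine gap is exactly the step you flag as delicate. For $E$ to commute with $P_{\mu_1}$ and $P_{\mu_2}$ one needs the identity of \emph{measures} $h'*\nu=\nu*h'$ for every probability measure $\nu$ on $\Irr(\CC)$ (since $P_\mu P_\nu=P_{\nu*\mu}$, commutation is literally this identity). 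The coincidence of the left and right $\CC'$-isotypic equivalence relations on $\Irr(\CC)$ only says that $\delta_r*h'$ and $h'*\delta_r$ have the same support; equality of the measures amounts to the statement that for all $r,t$ the $\CC'$-isotypic parts of $U_t\otimes U_{\bar r}$ and $U_{\bar r}\otimes U_t$ have the same dimension-weighted multiplicities, and this is where normality must be used through a nontrivial input. The paper gets it from Brugui\`eres--Natale \cite{BN}: the exact sequence $\CC'\to\CC\to\CC''$ forces $\CC'\cong\Rep G$ for a finite quantum group $G$, and the regular algebra $A=\bigoplus_{s\in\Irr(\CC')}U_s^{d(U_s)}$ carries the structure of a commutative \emph{central} algebra in $\CC$, so that $A\otimes Y\cong Y\otimes A$ for all $Y\in\CC$, which is precisely $h'*\nu=\nu*h'$. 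Without some such half-braiding statement, the ``compatible bimodule structure'' you invoke is not available, and mere agreement of the equivalence relations would not suffice.

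A second, softer, gap is your appeal to ``categorical orthogonality relations'' for the claim that $E(\eta)$ is $\CC'$-invariant. What is actually needed is that $P_{h'}$-fixed natural transformations are exactly the $\CC'$-invariant ones (Lemma~\ref{lem:invariance}); the paper deduces this from the description of $P_\nu$-fixed endomorphisms of the restricted functor $\iota\otimes U\colon\CC'\to\CC$, i.e.\ the fixed-point result \cite[Proposition~2.4]{NY} applied with $\CC'$ playing the role of the base category, not from orthogonality alone. Also note that in the strict-contraction step one should argue as in the paper that $P_{\mu_1}=P_{\mu_1-\delta h'}$ on $\ker E$ (using $P_{h'}=0$ there), so the bound is by the total mass $1-\delta$; your phrasing is consistent with this. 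Apart from these two missing inputs, your argument is the paper's.
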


Note that by \cite[Theorem~4.1]{NY2} this theorem generalizes
Theorem~\ref{thm:invariance} for $\hat H$-invariant $\phi$, but not
for arbitrary states, so formally these two results are independent.
Not surprisingly, the proofs are nevertheless similar. But before we turn to the
proof we need to formulate $\CC'$-invariance in a more analytic way.

Define a probability measure $h'$ on $\Irr(\CC')$ by
$$
h'(s)=\frac{d(U_s)^2}{d(\CC')},\ \ \text{where}\ \
d(\CC')=\sum_{t\in\Irr(\CC')}d(U_t)^2.
$$
It is known, and is easy to see using multiplicativity and
additivity of the dimension function, that for any probability
measure $\nu$ on $\Irr(\CC')$ we have
\begin{equation}\label{eq:h1}
\nu*h'=h'*\nu=h'.
\end{equation}
Since we can identify $\Irr(\CC')$ with a subset of $\Irr(\CC)$, we
can also view $h'$ as a measure on $\Irr(\CC)$.

\begin{lemma}\label{lem:invariance}
For a full finite rigid C$^*$-tensor subcategory $\CC'$ of a rigid
C$^*$-tensor category $\CC$, a natural transformation
$\eta\colon\iota\otimes U\to\iota\otimes U$ is $\CC'$-invariant if
and only if $P_{h'}(\eta)=\eta$.
\end{lemma}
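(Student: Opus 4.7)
The plan is to prove the two implications separately. The forward direction is immediate: if $\eta$ is $\CC'$-invariant then $\eta_{U_s\otimes X}=\iota_{U_s}\otimes\eta_X$ for every $s\in\Irr(\CC')$, so taking partial categorical traces gives $(\Tr_{U_s}\otimes\iota)(\eta_{U_s\otimes X})=d(U_s)\eta_X$ and hence $P_{U_s}(\eta)=\eta$; averaging against the probability measure $h'$ yields $P_{h'}(\eta)=\eta$.

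For the backward direction my strategy is to prove the stronger statement that $P_{h'}(\eta)$ is $\CC'$-invariant for \emph{every} $\eta\in\Nat_b(\iota\otimes U,\iota\otimes U)$; combined with the hypothesis $\eta=P_{h'}(\eta)$ this immediately forces $\eta$ to be $\CC'$-invariant. This reformulation is natural because, by \eqref{eq:h1}, $h'*h'=h'$, so $P_{h'}^2=P_{h'}$ is a projection whose fixed-point set coincides with its image, and the claim amounts to identifying this image with the space of $\CC'$-invariants. To verify $P_{h'}(\eta)_{U_s\otimes Y}=\iota_{U_s}\otimes P_{h'}(\eta)_Y$ for $s\in\Irr(\CC')$ (which suffices by additivity in the first tensor factor), I would decompose $U_t\otimes U_s=\bigoplus_{r\in\Irr(\CC')} m^r_{ts}\,U_r$ via orthonormal isometries $e^r_{ts,i}\colon U_r\to U_t\otimes U_s$, use naturality of $\eta$ to write
\[
\eta_{U_t\otimes U_s\otimes Y}=\sum_{r,i}(e^r_{ts,i}\otimes\iota)\,\eta_{U_r\otimes Y}\,((e^r_{ts,i})^*\otimes\iota),
\]
and then compute the partial trace $\Tr_{U_t}\otimes\iota$ via a standard conjugate solution $(R_{U_t},\bar R_{U_t})$ for $U_t$; each summand then turns into a Frobenius-reciprocity sandwich indexed by morphisms $U_s\to\bar U_t\otimes U_r$.

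Summing over $t\in\Irr(\CC')$ with the weights $d(U_t)$ and invoking the combinatorial identity $\sum_{t\in\Irr(\CC')}d(U_t)\,m^r_{ts}=d(U_r)\,d(U_s)$ for $r,s\in\Irr(\CC')$ --- which is precisely the relation $h'*\delta_s=h'$ from \eqref{eq:h1} unpacked --- the weights on both sides of the desired identity match. The main obstacle will be the careful bookkeeping of these isometries and partial traces; structurally, however, the statement is the categorical analogue of the classical fact that convolving with the normalized Haar measure of a closed subgroup of a locally compact group is precisely the projection onto functions invariant under left translations by that subgroup, and this is the picture one should keep in mind throughout the computation.
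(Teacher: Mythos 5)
Your forward direction is fine and agrees with the paper's (one line). Your reformulation of the converse is also legitimate: since $h'*h'=h'$ gives $P_{h'}^2=P_{h'}$, proving that the image of $P_{h'}$ consists of $\CC'$-invariant transformations is equivalent to the lemma. The route you then take is genuinely different from the paper's: the paper first reduces, via the shifted transformation $(\eta_{Z\otimes Y})_Z$, to showing $\eta_X=\iota_X\otimes\eta_\un$ for $X\in\CC'$, and then invokes \cite[Proposition~2.4]{NY} applied to the restricted functor $\iota\otimes U\colon\CC'\to\CC$ and the generating measure $h'$ on $\Irr(\CC')$. Your approach is a direct computation, and this is where there is a real gap. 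After decomposing $\eta_{U_t\otimes U_s\otimes Y}$ and taking the partial trace, what you obtain for each fixed $r$ is a sum of sandwiches
$$
\sum_{t\in\Irr(\CC'),\,i}\frac{h'(t)}{d(U_t)}\,\bigl((w^r_{ts,i})^*\otimes\iota\bigr)\,\bigl(\iota_{\bar U_t}\otimes\eta_{U_r\otimes Y}\bigr)\,\bigl(w^r_{ts,i}\otimes\iota\bigr),
\qquad w^r_{ts,i}\colon U_s\to\bar U_t\otimes U_r,
$$
and what you need this to equal is $\frac{h'(r)}{d(U_r)}\,\iota_{U_s}\otimes(\Tr_{U_r}\otimes\iota)(\eta_{U_r\otimes Y})$. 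This is an identity between \emph{operators} on $U_s\otimes Y\otimes U$, valid for an arbitrary morphism $T=\eta_{U_r\otimes Y}\in\CC(U_r\otimes Y\otimes U)$, and it does not follow from matching the scalar weights via $\sum_t d(U_t)m^r_{ts}=d(U_r)d(U_s)$; that identity only confirms that the (partial) traces of the two sides agree, i.e.\ it verifies the claim when $T$ is a multiple of the identity. Note also that $\iota_{\bar U_t}\otimes\eta_{U_r\otimes Y}$ is \emph{not} $\eta_{\bar U_t\otimes U_r\otimes Y}$, so naturality cannot be used a second time to absorb the $\bar U_t$ leg.

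The per-$r$ identity is in fact true, but proving it is the actual content of the argument. One way: pass by Frobenius reciprocity from $w^r_{ts,i}$ to $g_{t,i}=((w^r_{ts,i})^*\otimes\iota_{\bar U_r})(\iota_{\bar U_t}\otimes\bar R_{U_r})\colon\bar U_t\to U_s\otimes\bar U_r$, check using standard solutions that, after the normalization $v_{t,i}=\sqrt{d(U_t)/d(U_r)}\,w^r_{ts,i}$, the $g_{t,i}$ become isometries with mutually orthogonal ranges exhausting $U_s\otimes\bar U_r$, and establish the expansion $\iota_{U_s}\otimes R_{U_r}=\sum_{t,i}(g_{t,i}\otimes\iota_{U_r})v_{t,i}$ by pairing against the basis $(g_{t,i}\otimes\iota)v_{t,j}$ and using the conjugate equations; substituting this expansion into the definition of $\Tr_{U_r}\otimes\iota$ then yields the identity. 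This is several nontrivial computations, none of which appear in your sketch, and the normalization of $h'$ by $d(U_t)^2$ enters precisely through the requirement that the $g_{t,i}$ be isometries for the \emph{standard} solutions. So: correct strategy and correct intuition (the classical Haar-averaging picture), but the step you describe as bookkeeping is the entire proof. The paper's route via \cite[Proposition~2.4]{NY} outsources exactly this difficulty to a cited result, at the price of the small shift-trick reduction; your route, once completed, would have the merit of being self-contained.
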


\bp If $\eta$ is $\CC'$-invariant, then obviously $P_\nu(\eta)=\eta$
for any probability measure $\nu$ on $\Irr(\CC')$, in particular,
for $h'$. Conversely, assume $P_{h'}(\eta)=\eta$. It suffices to
show that $\eta_X=\iota_X\otimes\eta_\un$ for all $X\in\CC'$, since
by applying this statement to the natural transformation
$(\eta_{Z\otimes Y})_{Z\in\CC}\colon\iota\otimes Y\otimes
U\to\iota\otimes Y\otimes U$ (which was denoted by $\iota_Y\otimes
\eta$ in \cite{NY}) we then get $\eta_{X\otimes
Y}=\iota_X\otimes\eta_Y$ for all $X\in\CC'$, as required. For this,
in turn, consider $\iota\otimes U$ as a functor $\CC'\to\CC$. Then
$(\eta_X)_{X\in\CC'}$ is an endomorphism of this functor, while
$P_{h'}$ can be considered as an operator on the space of such
endomorphisms. By \cite[Proposition~2.4]{NY}, when $\CC=\CC'$, the
subspace of $P_\nu$-invariant endomorphisms consists of the elements
$(\iota_X\otimes T)_{X\in\CC'}$, with $T\in\CC(U)$, for any
generating probability measure $\nu$ on $\Irr(\CC')$. The same proof
works in general, so $\eta_X=\iota_X\otimes\eta_\un$ for all
$X\in\CC'$. \ep

Assume now that we are in the setting of
Theorem~\ref{thm:invariance2} and consider the corresponding exact
sequence $ \CC'\to\CC\xrightarrow{F} \CC''$. Since $F(X)$ is trivial
for every $X\in\CC'$, $F|_{\CC'}$ can be considered as a unitary
fiber functor. This already implies that the dimension function on
$\CC'$ is integral and that $\CC'$ can be identified with $\Rep G$
for a finite quantum group $G$. Consider the object
$$
A=\bigoplus_{s\in\Irr(\CC')}U_s^{d(U_s)}\in\CC'.
$$
It is shown in \cite[Section~5.2]{BN} that $A$ admits the structure
of a commutative central algebra in $\CC$. In particular, $A\otimes
Y\cong Y\otimes A$ for all $Y\in\CC$, which implies that
\begin{equation}\label{eq:h2}
\nu*h'=h'*\nu
\end{equation}
for any probability measure $\nu$ on $\Irr(\CC)$.

\bp[Proof of Theorem~\ref{thm:invariance2}] The proof goes along the
same lines as that of Theorem~\ref{thm:invariance}. We will only
consider the case of bounded natural transformations, the general
case is dealt with similarly to the second part of that proof.

We may assume that $\operatorname{supp}\mu=\Irr(\CC)$. We can then
write $\mu$ as a convex combination $t\mu_1+(1-t)\mu_2$ of two
measures, with $0<t<1$, $\mu_1$ supported on $\Irr(\CC')$ and
$\mu_2$ on $\Irr(\CC)\setminus\Irr(\CC')$. By \eqref{eq:h1} and
Lemma~\ref{lem:invariance}, the operator $E=P_{h'}$ defines a
projection onto the space of $\CC'$-invariant natural
transformations. By \eqref{eq:h2} this projection commutes with
$P_{\mu_1}$ and $P_{\mu_2}$. Therefore it suffices to show that the
restriction of $P_{\mu_1}$ to $\ker E$ is a strict contraction.
Since $\operatorname{supp}\mu_1=\Irr(\CC')$, there exists $\delta>0$
such that $\mu_1-\delta h'$ is a positive measure. Since
$P_{\mu_1}=P_{\mu_1-\delta h'}$ on $\ker E$, it follows then that
the norm of the restriction of $P_{\mu_1}$ to $\ker E$ is bounded by
$(\mu_1-\delta h')(\Irr(\CC))=1-\delta$. \ep

\begin{remark}
Theorem~\ref{thm:invariance} can be formulated by saying that, under
its assumptions, any positive harmonic function on $H$ arises from
that on $H/G$. In a similar way Theorem~\ref{thm:invariance2}
implies that any positive harmonic natural transformation
$\iota\otimes U\to\iota\otimes U$ of functors on $\CC$ arises from a
natural transformation $\iota\otimes F(U)\to\iota\otimes F(U)$ of
functors on $\CC''$. In other words, we claim that if an
endomorphism $\eta=(\eta_X)_{X\in\CC}$ of $\iota\otimes U$ is
$\CC'$-invariant, then the collection of morphisms
$$
F(X)\otimes F(U)\xrightarrow{F_2} F(X\otimes U)\xrightarrow{
F(\eta_X)} F(X\otimes U)\xrightarrow{F_2^{-1}}F(X)\otimes F(U)
$$
defines, necessarily uniquely, an endomorphism of $\iota\otimes
F(U)$. Indeed, by \cite[Corollary~5.8]{BN}, the category~$\CC''$ can
be identified with the category $A\mhyph\operatorname{mod}_\CC$ of
left $A$-modules in $\CC$ and then the functor $F$ is given by
$F(X)=A\otimes X$. Note that any left $A$-module can also be
considered as an $A$-bimodule by the commutativity of the central
algebra $A$, and this turns $A\mhyph\operatorname{mod}_\CC$ into a
tensor category with the tensor product $\otimes_A$. It follows that
our claim is equivalent to the statement that for any
$\CC'$-invariant $\eta$ the morphisms $\iota_A\otimes\eta_X\colon
A\otimes X\otimes U\to A\otimes X\otimes U$ are natural with respect
to the $A$-module morphisms $A\otimes X\to A\otimes Y$. But this is
clear, as $\iota_A\otimes\eta_X=\eta_{A\otimes X}$.
\end{remark}

\bigskip

\bigskip

\end{document}